\documentclass[reqno,10pt,a4paper,oneside]{amsart}

\usepackage{uniform-kan-prelude}
\usepackage{formatting-christian}

\title{Idempotent completion of cubes in posets}

\author{Christian Sattler}
\address{University of Gothenburg, Sweden}
\email{sattler@chalmers.se}

\date{December 23, 2018}

\newcommand{\Poset}{\mathbf{Poset}}
\newcommand{\FinComplPoset}{\Poset_{\mathsf{compl},\mathsf{fin}}}
\newcommand{\aug}{{\mathsf{aug}}}
\newcommand{\Kan}{{\mathsf{Kan}}}
\newcommand{\ttm}{{\mathsf{tt}}}
\newcommand{\fibrant}{{\mathsf{fib}}}

\begin{document}

\begin{abstract}
This note concerns the category $\Box$ of cartesian cubes with connections, equivalently the full subcategory of posets on objects $[1]^n$ with $n \geq 0$.
We show that the idempotent completion of $\Box$ consists of finite complete posets.
It follows that cubical sets, \ie presheaves over $\Box$, are equivalent to presheaves over finite complete posets.
This yields an alternative exposition of a result by Kapulkin and Voevodsky that simplicial sets form a subtopos of cubical sets.
\end{abstract}

\maketitle

\section{Preliminaries}

\subsection{Idempotents}

We briefly recall some of the basic theory of idempotents and their splittings.
A classical reference is~\cite{cauchy-completion}.

The following notions are relative to a category $\cal{E}$.
An \emph{idempotent} $(A, f)$ consists of an endomorphism $f \co A \to A$ satisfying $f^2 = f$:
\[
\xymatrix{
  A
  \ar[rr]^{f}
  \ar[dr]_{f}
&&
  A
\rlap{.}\\
&
  A
  \ar[ur]_{f}
}
\]
A \emph{retract} $(A, B, r, s)$ consists of a pair of morphisms $r \co A \to B$ (the retraction) and $s \co B \to A$ (the section) such that $rs = \id_B$:
\[
\xymatrix{
&
  A
  \ar[dr]^{r}
\\
  B
  \ar[ur]^{s}
  \ar[rr]_{\id}
&&
  B
\rlap{.}}
\]
Any retract $(A, B, s, r)$ induces an idempotent $sr \co A \to A$.
An idempotent is said to \emph{split} if it is induced by a retract diagram in this fashion.
The splitting of $f \co A \to A$ is also characterized as an equalizer or a coequalizer of $f$ and $\id_A$.
Retract diagrams are preserved by any functor, so these are absolute (co)limits.
The category $\cal{E}$ is called \emph{idempotent complete} if all its idempotents split.

\begin{definition}
A functor $F \co \cal{C} \to \cal{D}$ exhibits $\cal{D}$ as an \emph{idempotent completion} of $\cal{C}$ if $F$ is fully faithful, $\cal{D}$ is idempotent complete, and every object $B \in \cal{D}$ arises as a retract of an object $F A$ with $A \in \cal{C}$.
\qed
\end{definition}

The idempotent completion is also known as the Cauchy completion or Karoubi envelope.
It can be characterized using a universal property, but we will not need this here.

We write $\widehat{\cal{C}}$ for the category of presheaves over a category $\cal{C}$.
The main property of the idempotent completion relevant to our intended application is the following.

\begin{proposition}[{essentially \cite[Theorem~1]{cauchy-completion}}] \label{presheaves-idempotent-completion}
Let $F \co \cal{C} \to \cal{D}$ be an idempotent completion.
Then the induced pullback functor $F^* \co \widehat{\cal{D}} \to \widehat{\cal{C}}$ is an equivalence.
\qed
\end{proposition}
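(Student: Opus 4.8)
The plan is to exhibit a quasi-inverse to $F^*$ by left Kan extension and to verify it is an equivalence by testing the unit and counit on representables, invoking the retract hypothesis only at the very last step. Let $F_! \co \widehat{\cal{C}} \to \widehat{\cal{D}}$ be the left adjoint of $F^*$ (left Kan extension along $F$), and recall the structural facts we will use: $F^*$ preserves all limits and colimits, since (co)limits of presheaves are computed pointwise and $F^*$ is just restriction; $F_!$ preserves colimits, being a left adjoint; $F_!$ sends the representable $\mathbf{y}_{\cal{C}} A$ to the representable $\mathbf{y}_{\cal{D}}(F A)$; and representables are dense in any presheaf category. It then remains to show that $F^*$ is essentially surjective and fully faithful.

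\emph{Essential surjectivity} uses only that $F$ is fully faithful. The unit $\eta \co \id_{\widehat{\cal{C}}} \to F^* F_!$ is an isomorphism on each representable $\mathbf{y}_{\cal{C}} A$: its component is the map $\cal{C}(-,A) \to \cal{D}(F-,F A)$ induced by $F$, which is invertible by full faithfulness. Since $F^* F_!$ and $\id$ both preserve colimits and representables are dense, $\eta$ is a natural isomorphism, so every $P \in \widehat{\cal{C}}$ is isomorphic to $F^*(F_! P)$ and hence lies in the essential image of $F^*$.

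\emph{Full faithfulness} of $F^*$ is the heart of the matter, and is equivalent to the counit $\varepsilon \co F_! F^* \to \id_{\widehat{\cal{D}}}$ being a natural isomorphism. As $F_! F^*$ and $\id$ preserve colimits and representables are dense in $\widehat{\cal{D}}$, it suffices to check that $\varepsilon_{\mathbf{y} D}$ is invertible for every $D \in \cal{D}$. When $D = F A$ lies in the image of $F$, the object $\mathbf{y}(F A) \cong F_!(\mathbf{y} A)$ lies in the image of $F_!$, where the counit is automatically invertible: by the triangle identity $\varepsilon_{F_! X}$ is inverse to $F_!(\eta_X)$, and $\eta$ is an isomorphism by the previous paragraph. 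For an arbitrary $D$ we use the defining property of an idempotent completion: pick $r \co F A \to D$ and $s \co D \to F A$ with $r s = \id_D$. Then $\mathbf{y} s$ (section) and $\mathbf{y} r$ (retraction) exhibit $\mathbf{y} D$ as a retract of $\mathbf{y}(F A)$, so by naturality of $\varepsilon$ the morphism $\varepsilon_{\mathbf{y} D}$ is a retract of the isomorphism $\varepsilon_{\mathbf{y}(F A)}$ in the arrow category of $\widehat{\cal{D}}$; and a retract of an isomorphism is an isomorphism (if $g$ is invertible and $f$ is a retract of $g$ with section maps $i,j$ and retraction maps $p,q$ fitting into the evident commuting diagram, then $p g^{-1} j$ is a two-sided inverse of $f$). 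Hence $\varepsilon_{\mathbf{y} D}$ is invertible, $F^*$ is fully faithful, and together with essential surjectivity this makes $F^*$ an equivalence.

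I expect the main obstacle to be exactly this last step: setting up the reduction to representables (which relies on $F^*$, and hence $F_! F^*$, preserving colimits, together with density of the Yoneda embedding), handling the image of $F$ by bare full faithfulness, and then transporting along the retract — the one place where ``idempotent completion'' genuinely enters, as opposed to mere full faithfulness of $F$. A Kan-extension-free alternative rebuilds the quasi-inverse directly: for $P \in \widehat{\cal{C}}$ and $D \in \cal{D}$ written as a retract of $F A$, the resulting idempotent on $F A$ is $F(e)$ for a unique idempotent $e \co A \to A$, and one sets the value at $D$ to be the splitting of the idempotent $P(e)$ on the set $P(A)$ (possible since $\mathbf{Set}$ is idempotent complete), then checks independence of all choices and functoriality; this is essentially the content of~\cite[Theorem~1]{cauchy-completion}.
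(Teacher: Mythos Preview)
Your argument is correct. The paper itself does not supply a proof of this proposition: the statement is followed immediately by \texttt{\textbackslash qed} and attributed to~\cite[Theorem~1]{cauchy-completion}, so there is no ``paper's own proof'' to compare against beyond the citation.

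Your route via the adjunction $F_! \dashv F^*$, checking the unit and counit on representables and then propagating by density and cocontinuity, is the standard modern argument and is entirely sound. The only place to be slightly careful is the claim that it suffices to test $\varepsilon$ on representables: this uses that $F^*$ preserves colimits (so that $F_! F^*$ does), which you justify correctly by noting that colimits of presheaves are computed objectwise. Your final paragraph already identifies the alternative, more elementary construction (splitting $P(e)$ in $\mathbf{Set}$) that is closer to the original reference; either approach is fine here, and yours has the advantage of making the role of the retract hypothesis transparent.
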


\subsection{Cube category}

We write $\Poset$ for the category of small posets.

Following \cite{kapulkin:cubical}, we define $\Box$ as the full subcategory of $\Poset$ on powers $[1]^n$ of the walking arrow $[1] \defeq \braces{0 \to 1}$ with $n \geq 0$.
Equivalently, in the style of \cite{cohen-et-al:cubicaltt}, we may view $\Box$ as the opposite of the category of bounded distributive lattices free on a finite set.
One may also explicitly describe $\Box$ as a category of cubes with symmetries, diagonals, and (upper and lower) connections.
Cubical sets using this notion of cube category are sufficiently rich to interpret all main features of cubical type theory as introduced in~\cite{cohen-et-al:cubicaltt}, including recent extensions to higher inductive types~\cite{coquand:hit}.

\section{Idempotent completion of $\Box$}

Let $\FinComplPoset$ denote the full subcategory of complete finite posets; note that it has a small (countable) skeleton.
Recall that every complete finite poset is also cocomplete.
Thus, we may also describe $\FinComplPoset$ as the full subcategory of posets on finite bounded lattices.

The walking arrow $[1]$ is a finite bounded lattice, hence are its finite powers $[1]^n$ with $n \geq 0$.
We thus obtain a fully faithful inclusion $\Box \to \FinComplPoset$.

\begin{theorem} \label{box-idempotent-completion}
The inclusion $\Box \to \FinComplPoset$ is an idempotent completion.
\end{theorem}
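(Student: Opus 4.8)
The plan is to verify, in turn, the three clauses in the definition of idempotent completion. Full faithfulness of $\Box \to \FinComplPoset$ is already recorded above: both are full subcategories of $\Poset$ and every $[1]^n$ is a finite complete poset. So the work lies in showing that $\FinComplPoset$ is idempotent complete and that every finite complete poset is a retract of some $[1]^n$ in $\FinComplPoset$.

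For idempotent completeness I would first recall the standard fact that idempotents split already in $\Poset$: given a monotone $f \co P \to P$ with $f^2 = f$, let $Q$ be the set of fixed points $\{ p \in P : f(p) = p \}$ with the order inherited from $P$; then the corestriction $r \co P \to Q$ of $f$ and the inclusion $s \co Q \hookrightarrow P$ form a retract with $s r = f$. It then remains to see that $\FinComplPoset$ is closed under this splitting. Finiteness of $Q$ is immediate. For completeness, given $S \subseteq Q$ I claim its join in $Q$ is $f\bigl(\bigvee_P S\bigr)$: it is an upper bound of $S$ since $f$ is monotone and fixes each element of $S$, and it is the least such lying in $Q$ because any upper bound $q \in Q$ of $S$ satisfies $\bigvee_P S \leq q$, hence $f(\bigvee_P S) \leq f(q) = q$. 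Meets are handled dually, so $Q \in \FinComplPoset$. (The same computation shows more generally that a retract in $\Poset$ of a finite complete poset is again one, which is why $\FinComplPoset$ is the natural target.)

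The substantive step is the retract condition, and the key point is that the morphisms involved are arbitrary monotone maps rather than lattice homomorphisms — with the latter one would recover only the finite distributive lattices. Given a finite complete poset $P$, put $n \defeq |P|$ and identify $[1]^n$ with the poset $[1]^P$ of functions $P \to [1]$, ordered pointwise. Define $s \co P \to [1]^P$ by sending $p$ to the characteristic function of its down-set $\{ q \in P : q \leq p \}$, and $r \co [1]^P \to P$ by $r(x) \defeq \bigvee \{ q \in P : x(q) = 1 \}$, using that $P$ is finite and complete so that this join (including the empty one, giving the bottom) exists. Both maps are monotone, and $r s = \id_P$ because $r s(p)$ is the join of the down-set of $p$, which is $p$. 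Since $[1]^P$ is a finite complete poset and $s$, $r$ are morphisms of $\FinComplPoset$, this exhibits $P$ as a retract of an object in the image of $\Box$, as required.

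The only place demanding genuine care is the interaction between the fixed-point construction and completeness in the idempotent-splitting step; once one settles on the down-set embedding with the join as its retraction, the rest is routine bookkeeping, and I would expect no essential obstacle beyond that.
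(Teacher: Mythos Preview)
Your argument is correct and follows the same three-clause outline as the paper, but both substantive steps are handled more concretely than in the original. For idempotent completeness, the paper argues abstractly: it notes that finite posets form a finitely complete category (so idempotents split there), and then shows via comma categories that a retract of a complete poset is again complete (its \cref{poset-retract-terminal,poset-retract-complete}). Your fixed-point construction with the explicit formula $f(\bigvee_P S)$ for joins in $Q$ is a direct, self-contained alternative that bypasses the comma-category machinery. For the retract of $[1]^n$, the paper factors the construction through the posetal Yoneda embedding $C \to [1]^{C^{\op}}$ (retracted by the cocontinuous extension of the identity) followed by a second retract $[1]^{C^{\op}} \hookrightarrow [1]^{|C|}$ coming from the $(\Lan_I \dashv I^*)$ reflection; your down-set section with the join retraction is exactly the composite of these two steps written out in one go. The paper's route makes the conceptual content (Yoneda, Kan extension) visible, while yours is shorter and needs no categorical vocabulary beyond monotone maps and joins.
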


\begin{proof}
The category of finite posets is finitely complete, hence in particular idempotent complete.
To transfer this property to $\FinComplPoset$, it suffices to check given a retraction $A \to B$ of (finite) posets that $B$ is complete as soon as $A$ is.
This is \cref{poset-retract-complete} below.

It remains to check that every complete finite poset arises as a retract of $[1]^n$ with $n \geq 0$.
This is \cref{cocomplete-poset-retract-box} below.
\end{proof}

\begin{lemma} \label{poset-retract-terminal}
Retracts in $\Poset$ preserve the existence of terminal objects.
\end{lemma}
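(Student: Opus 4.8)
The plan is to show directly that the retraction carries the terminal object of $A$ to a terminal object of $B$. Recall first that, for a poset regarded as a category, a terminal object is exactly a greatest (top) element: a morphism $p \to q$ exists precisely when $p \leq q$, and when it exists it is automatically unique. So the statement to prove is just that retracts in $\Poset$ preserve greatest elements.

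Concretely, let $(A, B, s, r)$ be a retract in $\Poset$, with section $s \co B \to A$, retraction $r \co A \to B$, and $r s = \id_B$, and suppose $A$ has a greatest element $\top$. I claim $r(\top)$ is greatest in $B$. For any $b \in B$ we have $s(b) \leq \top$ in $A$ since $\top$ is the top element. The map $r$ is monotone (monotone maps are exactly the functors between posets), so $r(s(b)) \leq r(\top)$, and by $r s = \id_B$ this says $b \leq r(\top)$. Since $b$ was arbitrary, $r(\top)$ is greatest in $B$, hence terminal there.

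There is no real obstacle; the only thing to keep straight is the dictionary between the categorical notion (terminal object) and the order-theoretic one (top element), together with the trivial observation that monotone maps preserve the inequalities in play. I would also note in passing that the dual argument, applied to the opposite posets $A^{\mathrm{op}}$ and $B^{\mathrm{op}}$, shows that retracts in $\Poset$ likewise preserve initial objects, i.e.\ bottom elements; this symmetric statement is convenient to have on record for the subsequent treatment of completeness.
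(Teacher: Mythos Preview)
Your proof is correct and is essentially the same argument as the paper's: you apply the retraction to the terminal object of $A$ and verify terminality in $B$ by sectioning an arbitrary element, comparing to the top in $A$, and retracting back. The only difference is cosmetic---you phrase it order-theoretically (top element, monotonicity) where the paper uses categorical language (terminal object, functoriality)---and your added remark on the dual statement is a harmless bonus.
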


\begin{proof}
Let
\[
\xymatrix{
&
  A
  \ar[dr]^{R}
\\
  B
  \ar[ur]^{S}
  \ar[rr]_{\Id}
&&
  B
}
\]
be a retract of posets.
Given a terminal object $1_A$ in $A$, we will verify that $1_B \defeq R 1_A$ is terminal in $B$.
Given $X \in B$, a map $X \to 1_B$ is obtained by taking the map $S X \to 1_A$ (using terminality of $1_A$) and applying $R$.
\end{proof}

Note the \cref{poset-retract-terminal} really only works for posets, not categories in general.
For a counterexample, note that the terminal category is a retract of the walking pair of arrows.

\begin{lemma} \label{poset-retract-complete}
Retracts in $\Poset$ preserve completeness.
\end{lemma}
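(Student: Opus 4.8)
The plan is to reduce completeness to the existence of arbitrary infima and then transport infima across the retract, generalizing the argument of \cref{poset-retract-terminal}. Recall that a poset, viewed as a category, is complete precisely when every subset has a greatest lower bound; the terminal object is the infimum of the empty subset, so \cref{poset-retract-terminal} will reappear as the special case $U = \emptyset$ below. (Such a poset then automatically has all suprema as well, recovering the cocompleteness noted above, but this will not be needed.) So let $R \co A \to B$ and $S \co B \to A$ be monotone maps with $R S = \Id_B$, assume $A$ is complete, and fix an arbitrary subset $U \subseteq B$; the goal is to exhibit $\inf_B U$.

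My candidate is $b_0 \defeq R(\inf_A S(U))$, where $\inf_A S(U)$ exists by completeness of $A$. First I would verify that $b_0$ is a lower bound of $U$: for each $u \in U$ we have $\inf_A S(U) \le S u$, so monotonicity of $R$ gives $b_0 \le R S u = u$. Then I would verify that $b_0$ dominates every lower bound: if $c \in B$ satisfies $c \le u$ for all $u \in U$, then monotonicity of $S$ gives $S c \le S u$ for all $u$, hence $S c \le \inf_A S(U)$, and applying the monotone map $R$ yields $c = R S c \le R(\inf_A S(U)) = b_0$. Thus $b_0 = \inf_B U$, and since $U$ was arbitrary, $B$ is complete.

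There is essentially no serious obstacle: the only real content is guessing the formula $\inf_B U = R(\inf_A S(U))$, which is in any case dictated by \cref{poset-retract-terminal} being its instance at $U = \emptyset$, after which both checks are one-line applications of monotonicity of $R$, monotonicity of $S$, and the retract identity $R S = \Id_B$. The one point worth flagging is the same subtlety highlighted in the remark after \cref{poset-retract-terminal}: the argument genuinely relies on working in $\Poset$ — infima are unique and ``being a lower bound'' is a property rather than extra structure — and indeed fails for categories in general.
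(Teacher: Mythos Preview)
Your proof is correct. It differs from the paper's in presentation rather than in substance: the paper treats completeness as existence of limits for arbitrary diagrams $F \co C \to B$, lifts the retract to a retract of comma posets $B \downarrow F$ and $A \downarrow SF$, and then invokes \cref{poset-retract-terminal} as a black box to transport the terminal object (the limit cone) across. You instead use the order-theoretic characterization of completeness via infima of subsets and supply the explicit formula $\inf_B U = R(\inf_A S(U))$, checking both defining properties by hand. Unwinding the paper's comma-category argument produces exactly your computation, so the two are the same idea at different levels of abstraction; your version is more elementary and self-contained, while the paper's makes the reduction to \cref{poset-retract-terminal} structural rather than merely an observed special case.
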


\begin{proof}
Let
\[
\xymatrix{
&
  A
  \ar[dr]^{R}
\\
  B
  \ar[ur]^{S}
  \ar[rr]_{\Id}
&&
  B
}
\]
be a retract of posets.
We assume $A$ complete and will show $B$ complete.

Given a diagram $F \co C \to B$, we need to show that $B \downarrow F$ has a terminal object.
By functoriality of comma category formation, the above retract diagram lifts to a retract diagram
\[
\xymatrix{
&
  A \downarrow SF
  \ar[dr]^{R \downarrow C}
\\
  B \downarrow F
  \ar[ur]^{S \downarrow C}
  \ar[rr]_{\Id}
&&
  B \downarrow F
}
\]
of comma categories, which are again posets.
By completeness of $A$, we have a terminal object in $A \downarrow SF$.
The conclusion then follows from \cref{poset-retract-terminal}.
\end{proof}

\begin{lemma} \label{cocomplete-poset-retract-box}
Let $C \in \Poset$ have set of objects $|C|$ and decidable hom-sets.
Then $C$ is a retract of $[1]^{|C|}$.
\end{lemma}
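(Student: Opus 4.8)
The plan is to realize $C$ as a retract of $[1]^{|C|}$ via the embedding that sends each element to its principal downset, using that $C$ is \emph{complete} --- necessarily so, since by \cref{poset-retract-complete} any retract of the complete lattice $[1]^{|C|}$ is again complete, and in the application of this lemma inside the proof of \cref{box-idempotent-completion} the poset $C$ is indeed finite and complete.

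Identify $[1]^{|C|}$ with the poset of (decidable) subsets of $|C|$ ordered by inclusion. First I would define the section $S \co C \to [1]^{|C|}$ by $S(x) = \{y \in |C| : y \le x\}$, the principal downset of $x$; decidability of the hom-sets of $C$ is exactly what makes this a well-defined element of $[1]^{|C|}$, and monotonicity of $S$ is immediate from transitivity of the order. Next I would define the retraction $R \co [1]^{|C|} \to C$ by $R(T) = \bigvee_C T$, the join in $C$ of the subset $T \subseteq |C|$; this exists because $C$ is complete and finite, hence cocomplete, and $R$ is monotone because a larger subset has a larger join. Finally, $R S = \id_C$, since for every $x$ the set $\{y : y \le x\}$ has greatest element $x$ and therefore join $x$. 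This exhibits $C$ as a retract of $[1]^{|C|}$, which completes the proof. (Finiteness of $|C|$ is used only to ensure that $[1]^{|C|}$ is an object of $\Box$, as needed in \cref{box-idempotent-completion}.)

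The one genuine obstacle is the first step: one must recognize that completeness has to be assumed and that the join provides the retraction, since no purely combinatorial retraction works for arbitrary finite posets. For example, the two-element antichain $\{a, b\}$ is not a retract of $[1]^2$: a monotone map $[1]^2 \to \{a, b\}$ must send $\bot$ to a lower bound and $\top$ to an upper bound of its image, forcing that image to be a singleton. Once completeness is in hand, each verification above is routine.
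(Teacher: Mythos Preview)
Your proof is correct and is, at bottom, the same construction as the paper's, presented more directly. The paper factors the retract through the poset of downsets $[1]^{C^{\op}}$: first the Yoneda embedding $y \co C \to [1]^{C^{\op}}$ with retraction the cocontinuous extension of $\id_C$, then the restriction along $|C| \hookrightarrow C$ with retraction its left adjoint $\Lan_I$. Composing the paper's two sections gives exactly your $S(x) = \{y : y \le x\}$, and composing its two retractions gives exactly your $R(T) = \bigvee_C T$ (since $\Lan_I$ sends a subset to the downset it generates, and the cocontinuous extension of the identity then takes the join). Your one-step version is more elementary and avoids invoking universal properties; the paper's two-step version makes the categorical provenance of the maps visible. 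You are also right that cocompleteness of $C$ is an implicit hypothesis here---the paper's proof uses it explicitly, and the lemma's label confirms the intent.
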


\begin{proof}
We use the posetal Yoneda embedding $y \co C \to [1]^{C^\op}$.
As for categories, it is the universal map from $C$ to a cocomplete poset.
Since $C$ is already cocomplete, there exists a unique cocontinuous functor $R$ making the following diagram commute (using that $C$ is skeletal):
\[
\xymatrix{
  C
  \ar[r]^-{y}
  \ar[dr]_{\Id}
&
  [1]^{C^\op}
  \ar@{.>}[d]^{R}
\\&
  C
\rlap{.}}
\]
This exhibits $C$ as a retract of $[1]^{C^\op}$.

In a second step, we show that $[1]^{C^\op}$ is a retract of $[1]^{|C|}$.
Seeing $|C|$ as a discrete poset, we have an inclusion $I \co |C| \to C$ that is bijective on objects.
Thus, the restriction functor $(-) \cc I \co [1]^{C^\op} \to [1]^{|C|}$ is fully faithful.
Together with its left adjoint $\Lan_I$, it thus forms a reflection.
Since $C$ is skeletal, the counit isomorphism of this reflection is valued in identities, \ie we have a retract
\[
\xymatrix{
&
  [1]^{|C|}
  \ar[dr]^{\Lan_I}
\\
  [1]^{C^\op}
  \ar[ur]^{(-) \cc I}
  \ar[rr]_{\Id}
&&
  [1]^{|C|}
\rlap{.}}
\]

The goal follows by composition of retracts.
\end{proof}

\section{Application to comparing cubical and simplicial sets}

\subsection{The essential geometric embedding}

Let $\Delta$ denote the simplex category.
We may use \cref{box-idempotent-completion} to give a different exposition of the proof by Kapulkin and Voevodsky of the following theorem:

\begin{theorem}[{\cite[Section~1]{kapulkin:cubical}}] \label{embedding-sset-to-cset}
There is an essential geometric embedding $\widehat{\Delta} \to \widehat{\Box}$.
\end{theorem}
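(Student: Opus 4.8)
The plan is to reduce the statement to the idempotent-completion description of cubical sets and then feed everything into the standard machinery of geometric morphisms induced by functors between small categories. By \cref{box-idempotent-completion} and \cref{presheaves-idempotent-completion}, restriction along $\Box \to \FinComplPoset$ is an equivalence $\widehat{\FinComplPoset} \xrightarrow{\sim} \widehat{\Box}$ (recall $\FinComplPoset$ has a small skeleton, so its presheaf category is well-behaved). It therefore suffices to exhibit an essential geometric embedding $\widehat{\Delta} \to \widehat{\FinComplPoset}$, and for this it is enough to observe that $\Delta$ embeds fully faithfully into $\FinComplPoset$: each $[n] \in \Delta$ is a finite linear order, in particular a finite bounded lattice and hence a finite complete poset, and the morphisms in $\Delta$ are exactly the monotone maps, so the usual inclusion $\Delta \hookrightarrow \Poset$ factors through a fully faithful functor $i \co \Delta \to \FinComplPoset$.

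Next I would invoke the adjoint triple $i_! \dashv i^* \dashv i_*$ between presheaf categories, with $i^*$ the restriction functor and $i_!$, $i_*$ the left and right Kan extensions along $i$. Since $i$ is fully faithful, the unit $\Id \to i^* i_!$ and the counit $i^* i_* \to \Id$ are isomorphisms, so $i_!$ and $i_*$ are both fully faithful. Moreover $i^*$ preserves all limits and colimits, these being computed pointwise in presheaf categories; in particular $i^*$ is left exact. Hence $i^* \dashv i_*$ is a geometric morphism $\widehat{\Delta} \to \widehat{\FinComplPoset}$ whose direct image $i_*$ is fully faithful, \ie a geometric embedding, and it is essential because $i^*$ carries the further left adjoint $i_!$. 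Composing with the equivalence above — an equivalence of toposes being both essential and a geometric embedding, and both classes being closed under composition — produces the essential geometric embedding $\widehat{\Delta} \to \widehat{\Box}$.

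I do not expect a genuine obstacle here: all the real work has already been done in \cref{box-idempotent-completion}, and what remains is bookkeeping. The two places that want a little care are (i) keeping straight which adjoint plays which role — the inverse image must be the left-exact functor, and it is reassuring that restriction functors between presheaf categories are automatically exact — and (ii) the fact, if one cares, that the subtopos of $\widehat{\Box}$ cut out this way agrees with the one identified by Kapulkin and Voevodsky; for the statement as phrased, mere existence of such an embedding is all that is required.
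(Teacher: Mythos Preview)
Your proposal is correct and follows essentially the same route as the paper: both arguments factor through the fully faithful inclusion $\Delta \hookrightarrow \FinComplPoset$ together with the equivalence $\widehat{\FinComplPoset} \simeq \widehat{\Box}$ from \cref{box-idempotent-completion} and \cref{presheaves-idempotent-completion}, then read off the essential geometric embedding from the resulting adjoint triple. You spell out in more detail than the paper why the adjoint triple $i_! \dashv i^* \dashv i_*$ is an essential geometric embedding, which is fine.
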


Note that essentiality of the geometric embedding, meaning a further left adjoint to the inverse image functor, was not observed in the cited reference.

\begin{proof}
Observe first that every object $[n]$ of $\Delta$ is a complete finite poset.
We have fully faithful embeddings of index categories as follows:
\[
\xymatrix{
  \Delta
  \ar[dr]
&&
  \Box
  \ar[dl]^(0.4)*!/r0.5cm/{\labelstyle\text{idempotent completion}}
\\&
  \FinComplPoset
\rlap{,}}
\]
using \cref{box-idempotent-completion} for the annotation of the right arrow.

Upon taking presheaves, we therefore obtain essential geometric embeddings (with direct images given by right Kan extension, inverse images given by precomposition, and further left adjoints given by left Kan extension) as follows:
\begin{equation} \label{essential-geometric-embeddings}
\begin{gathered}
\xymatrix{
  \widehat{\Delta}
  \ar[dr]
&&
  \widehat{\Box}
  \ar[dl]^(0.4){\simeq}
\\&
  \widehat{\FinComplPoset}
\rlap{.}}
\end{gathered}
\end{equation}
Here, we used \cref{presheaves-idempotent-completion} to derive the equivalence on the right.
Thus, we obtain an essential geometric embedding $\widehat{\Delta} \to \widehat{\Box}$ as desired.
\end{proof}

Evaluating the resulting inverse image functor $\widehat{\Box} \to \widehat{\Delta}$, a representable $y([1]^n)$ is first mapped to the representable $y([1]^n)$ in $\widehat{\FinComplPoset}$ and then restricted to $\Delta$.
The resulting presheaf sends $[m] \in \Delta$ to $\Poset([m], [1]^n) \simeq \widehat{\Delta}(y[m], (\Delta^1)^n)$, \ie coincides with the standard triangulation of the $n$-cube.
By naturality and cocontinuity, the inverse image functor is thus the standard triangulation functor of cubical sets, guaranteeing that our construction coincides with the one of~\cite{kapulkin:cubical}.

The left adjoint $\widehat{\Delta} \to \widehat{\Box}$ to the inverse image functor sends a representable $y [m]$ first to the representable $y [m]$ in $\widehat{\FinComplPoset}$ and then restricts it to $\Box$.
The resulting presheaf sends $[1]^n$ to $\Poset([1]^n, [m])$.
Following the below remark, this may also be seen as the quotient of $y ([1]^m)$ by the endomorphism of $[1]^m$ reordering each $x$ in ascending fashion.

\begin{remark}
Write the poset $[n]$ as $\braces{0 \to \ldots \to n}$.
We can exhibit an object $[n] \in \Delta$ explicitly as a retract in $\Poset$ of $[1]^n$ by sending $k \in [n]$ to
\[
(\underbrace{0, \ldots, 0}_{n-k}, \underbrace{1, \ldots, 1}_{k}) \in [1]^n
\]
and $x \in [1]^n$ to $\sum_i x_i \in [n]$.
This observation suffices to recognize $\Delta$ as a full subcategory of the idempotent completion of $\Box$, using only the fact that $\Poset$ is idempotent complete and hence contains the idempotent completion of $\Box$ as a full subcategory.
In turn, this is enough to carry out the proof of \cref{embedding-sset-to-cset}, not requiring full use \cref{box-idempotent-completion} or its ingredients \cref{poset-retract-complete,poset-retract-terminal,cocomplete-poset-retract-box}.
We believe this mirrors best the original proof in~\cite{kapulkin:cubical}.
\qed
\end{remark}

\subsection{Preservation of monomorphisms}

Let us discuss some properties of the essential geometric embedding of \cref{embedding-sset-to-cset}.

\begin{proposition} \label{left-kan-extension-pres-mono-instance}
In the essential geometric embedding $\widehat{\Delta} \to \widehat{\Box}$, the left adjoint to the inverse image functor preserves monomorphisms.
\end{proposition}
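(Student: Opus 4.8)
The plan is to reduce the claim to the boundary inclusions $\partial\Delta^n \to \Delta^n$ and then to compute there directly. By the construction in the proof of \cref{embedding-sset-to-cset}, the left adjoint under consideration is, up to the equivalence $\widehat{\FinComplPoset} \simeq \widehat\Box$ of \cref{presheaves-idempotent-completion}, the left Kan extension $\Lan_j \co \widehat\Delta \to \widehat{\FinComplPoset}$ along the inclusion $j \co \Delta \to \FinComplPoset$. As an equivalence preserves monomorphisms, it suffices to show that $\Lan_j$ does. I will use that $\Lan_j$ is cocontinuous and carries the representable simplicial set $\Delta^m$ to the representable presheaf on the poset $[m] \in \FinComplPoset$, which I write $y[m]$.

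Next I would invoke the standard fact that the monomorphisms of simplicial sets form the saturation of the set of boundary inclusions $\braces{\partial\Delta^n \to \Delta^n}_{n \geq 0}$, \ie the smallest class of maps containing them and closed under pushout along arbitrary maps, transfinite composition, and retracts. The class of monomorphisms $m$ in $\widehat\Delta$ for which $\Lan_j(m)$ is again a monomorphism enjoys each of these closure properties: $\Lan_j$ preserves pushouts and transfinite composites (being cocontinuous) and retracts (being a functor), while monomorphisms in the presheaf topos $\widehat{\FinComplPoset}$ are stable under pushout along arbitrary maps, transfinite composition, and retracts, all of this being visible pointwise in $\mathbf{Set}$. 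Hence it remains only to treat the boundary inclusions.

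For a fixed $n$, I would present $\partial\Delta^n$ as the colimit of the diagram that sends a proper subset $S$ of $\braces{0, \ldots, n}$ to the face of $\Delta^n$ it spans, with transition maps the evident face inclusions and colimit cocone the inclusion into $\Delta^n$. Applying the cocontinuous functor $\Lan_j$ and then evaluating at an object $P \in \FinComplPoset$ turns the map $\Lan_j(\partial\Delta^n) \to \Lan_j(\Delta^n) = y[n]$ into the map from $\operatorname{colim}_{S} \Poset(P, S)$ to $\Poset(P, [n])$ induced by postcomposition with the inclusions of chains $S \hookrightarrow \braces{0, \ldots, n}$. Since these chain inclusions are injective and the indexing diagram is closed under intersections, this colimiting map is injective, with image precisely the set of monotone maps $P \to [n]$ that are not surjective. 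So $\Lan_j(\partial\Delta^n \to \Delta^n)$ is pointwise injective, hence a monomorphism, which finishes the reduction.

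I expect the main obstacle to be the absence of any purely formal shortcut: the inverse image functor of the geometric embedding is left exact, but the \emph{further} left adjoint $\Lan_j$ is not (it fails to preserve pullbacks), so monomorphism preservation cannot simply be read off from exactness and one is pushed into the cellular reduction above. Within that reduction the only delicate point is the identification of $\Lan_j(\partial\Delta^n)$; it comes out to the subpresheaf of non-surjective monotone maps precisely because $\Poset(P, -)$ carries the face inclusions to injections and the face diagram is closed under intersections. It is worth noting here that $\FinComplPoset$ contains no empty poset, the empty poset failing to be complete, which is what makes even the case $n = 1$ work out: there $\Lan_j(\partial\Delta^1) = y[0] \sqcup y[0] \to y[1]$ is a monomorphism because $\Poset(P, [1])$ contains two distinct constant maps for every nonempty $P$.
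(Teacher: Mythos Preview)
Your argument is correct in outline but contains one imprecision worth fixing: the indexing poset of \emph{nonempty} proper subsets of $\{0,\ldots,n\}$ is \emph{not} closed under intersections for $n \geq 1$ (e.g.\ $\{0\}\cap\{1\}=\emptyset$), so the injectivity of the colimiting map does not follow from that closure property alone. What actually makes it work, for every $n$ and not just $n=1$, is the observation you make at the end: every $P \in \FinComplPoset$ is nonempty, so if two elements $(S,f)$ and $(T,g)$ have the same image $h \co P \to [n]$, the image of $h$ is a nonempty subset of $S \cap T$, whence $S \cap T$ is itself a nonempty proper subset and lies in the diagram, identifying the two elements. Once you phrase it that way, the computation goes through.

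Your route is genuinely different from the paper's. The paper does not reduce to boundary inclusions; instead it proves two general lemmas: first, that for an elegant Reedy category with pullbacks along face maps (preserving the factorization), the colimit functor to $\Set$ preserves monomorphisms; second, that left Kan extension along a functor $i$ out of such a category preserves monomorphisms provided the comma categories $M \downarrow i$ inherit the relevant structure. It then checks these hypotheses for $i \co \Delta \to \FinComplPoset$, using that $\Delta_\aug$ has pullbacks along face maps and that nonemptiness of $M$ ensures the pullbacks needed stay in $\Delta$. Your cellular reduction is more elementary and self-contained, trading the Reedy machinery for a direct computation at the generators; the paper's approach is more abstract and yields reusable lemmas, at the cost of setting up more infrastructure. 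Both ultimately hinge on the same non-formal fact, namely that objects of $\FinComplPoset$ are nonempty.
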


Before we can show this statement, we will need to collect a few observations.
Recall from~\cite{bergner-rezk-elegant} the notion of elegant Reedy category.

\begin{lemma} \label{reedy-mono-lemma}
Let $\cal{C}$ be an elegant Reedy category.
Assume that $\cal{C}$ has pullbacks along face faces, and that these pullbacks preserve face and degeneracy maps.
Then the functor $\colim \co \widehat{\cal{C}} \to \Set$ preserves monomorphisms.
\end{lemma}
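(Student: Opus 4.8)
The plan is to reduce to the boundary inclusions of representable presheaves, whose colimits I can compute directly using the pullback hypothesis. First I would record that $\colim \co \widehat{\cal{C}} \to \Set$ is left adjoint to the constant-presheaf functor $\Set \to \widehat{\cal{C}}$, and hence preserves all colimits. Then, since $\cal{C}$ is an elegant (in particular Eilenberg--Zilber) Reedy category, every monomorphism $X \hookrightarrow Y$ in $\widehat{\cal{C}}$ carries a relative skeletal filtration: writing $Y_{(n)}$ for the union of $X$ with the $n$-skeleton of $Y$, and $Y_{(-1)} \defeq X$, the presheaf $Y$ is the transfinite composite of $X = Y_{(-1)} \hookrightarrow Y_{(0)} \hookrightarrow Y_{(1)} \hookrightarrow \dots$, and each step $Y_{(n-1)} \hookrightarrow Y_{(n)}$ is a pushout of a coproduct of boundary inclusions $\partial c \hookrightarrow \cal{C}(-, c)$, one for each nondegenerate $n$-cell of $Y$ not in $X$. (Here the compatibility of the pullbacks along face maps with face and degeneracy maps is what makes this decomposition compatible with the Reedy factorisation; the notion of elegance is from~\cite{bergner-rezk-elegant}.) Thus every monomorphism of $\widehat{\cal{C}}$ lies in the weakly saturated class --- closed under transfinite composition, pushout, coproduct and retract --- generated by the boundary inclusions. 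Since the monomorphisms of $\Set$ form such a class and $\colim$ preserves transfinite composites, pushouts and coproducts (and trivially retracts), it suffices to show that $\colim$ sends each boundary inclusion $\partial c \hookrightarrow \cal{C}(-, c)$ to a monomorphism.

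Fix $c \in \cal{C}$. The colimit of a representable presheaf is terminal, so $\colim(\cal{C}(-, c)) \cong \ast$; it therefore remains to show that $\colim(\partial c)$ has at most one element. Now $\partial c$ is the union, inside $\cal{C}(-, c)$, of the images of the noninvertible face maps into $c$; each morphism in $\partial c$ factors uniquely through a smallest such face, so, organising these proper faces into the category $\cal{D}_c$ whose morphisms are the face maps commuting with the inclusions into $c$, one identifies $\partial c$ with the colimit $\colim_{e \in \cal{D}_c} \cal{C}(-, e)$ formed in $\widehat{\cal{C}}$. As $\colim$ preserves colimits and sends each representable to $\ast$, this yields $\colim(\partial c) \cong \colim_{e \in \cal{D}_c} \ast \cong \pi_0(\cal{D}_c)$. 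So I am reduced to checking that $\cal{D}_c$ is empty or connected.

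If $c$ has minimal degree it has no proper faces and $\cal{D}_c$ is empty. Otherwise, given two proper faces $e_1 \to c$ and $e_2 \to c$, I form the pullback $e_1 \times_c e_2$: it exists because it is a pullback along a face map, and by hypothesis its two projections, to $e_1$ and to $e_2$, are again face maps. Hence $e_1 \times_c e_2 \to c$ is a face map of degree strictly less than that of $c$, in particular noninvertible, so $e_1 \times_c e_2$ is an object of $\cal{D}_c$ equipped with maps to both $e_1$ and $e_2$. Therefore any two objects of $\cal{D}_c$ are connected, $\cal{D}_c$ is connected, and $\colim(\partial c)$ has at most one element, as required.

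The hard part will be the reduction in the first paragraph: one needs the fact that monomorphisms of presheaves over an elegant Reedy category are generated, under the weakly saturated class operations, by the boundary inclusions of representables. This rests on the theory of (relative) skeleta and the Eilenberg--Zilber property, and it is also the place where the hypotheses on pullbacks along face maps must be used with some care. Once that reduction is in place, the computation $\colim(\partial c) \cong \pi_0(\cal{D}_c)$ and the connectedness of $\cal{D}_c$ coming from intersections of faces are routine.
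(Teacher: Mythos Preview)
Your argument is correct and takes a genuinely different route from the paper. The paper works directly on categories of elements: it views $P \hookrightarrow Q$ as a sieve of discrete Grothendieck fibrations over $\cal{C}$, takes two objects of $P$ whose images in $Q$ lie in the same component, and \emph{normalises} the connecting zig-zag in $Q$ (using Reedy factorisation and pullbacks along face maps) into a pair of face maps whose sources are joined by a zig-zag of degeneracy maps; elegance then gives opcartesian lifts of degeneracies along the sieve $F$, so the whole configuration lifts to $P$. Your approach trades this element-chasing for the cellular machinery: reduce to boundary inclusions via the skeletal filtration, identify $\colim(\partial c)$ with $\pi_0(\cal{D}_c)$, and connect any two proper faces by their intersection. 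Your route is more modular and makes the role of the pullback hypothesis very transparent; the paper's is more self-contained, avoiding the skeletal decomposition as an imported black box.

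One correction: you locate the pullback hypothesis in the wrong step. The cellular presentation of monomorphisms by boundary inclusions holds in \emph{any} elegant Reedy category and needs no pullbacks; that is exactly the Bergner--Rezk result you cite. Likewise the identification $\partial c \cong \colim_{e \in \cal{D}_c} \cal{C}(-,e)$ follows from uniqueness of Reedy factorisation alone: any pair $(e',g)$ with image $f \in \partial c$ is related in the colimit to $(e,f^-)$, where $f = f^+ f^-$ is the factorisation, via the face map $e \to e'$ obtained by Reedy-factoring $g$. (Your phrase ``factors uniquely through a smallest such face'' is slightly imprecise unless face maps are monic, but the conclusion stands.) The pullback hypothesis is required only, and exactly, where you in fact invoke it: to produce a span in $\cal{D}_c$ between any two proper faces. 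Incidentally, your proof uses only that pullbacks of face maps along face maps are face maps; the paper's zig-zag normalisation also uses preservation of degeneracies.
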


\begin{proof}
Consider a monomorphism
\[
\xymatrix@C-0.5cm{
  P
  \ar[dr]
  \ar@{->}[rr]^{F}
&&
  Q
  \ar[dl]
\\&
  \cal{C}
}
\]
of discrete Grothendieck fibrations over $\cal{C}$.
We have to show that $F$ induces a monomorphism on connected components.

Note that $F$ is a discrete fibration by standard closure properties; since it is mono, it is thus a sieve.
Note that $Q$ inherits the elegant Reedy structure from $\cal{C}$ and that the assumptions on pullbacks in $\cal{C}$ descend to $Q$.
Since $\cal{C}$ is elegant and $F$ is mono, $F$ has opcartesian lifts of degeneracy maps.

Let $S$ and $T$ be objects of $P$ such that $FS$ and $FT$ lie in the same connected component of $Q$, \ie are connected by a zig-zag of morphisms.
Using the Reedy factorization and pullbacks along face maps in $Q$, we may ``normalize'' this zig-zag to find face maps $\overline{S'} \to FS$ and $\overline{T'} \to FT$ such that $\overline{S'}$ and $\overline{T'}$ are connected by a zig-zag of degeneracy maps.%
\footnote{In fact, using elegancy, one may further reduce the zig-zag of degeneracy  maps to a cospan by taking pushouts.}
Since $F$ is a discrete Grothendieck fibration, we have lifts $S' \to S$ and $T' \to T$ of $\overline{S'} \to FS$ and $\overline{T'} \to FT$, respectively.
Since $F$ restricted to degeneracy maps in the base is a discrete Grothendieck bifibration as well as injective on objects, we may lift the zig-zag of degeneracy maps between $\overline{S'}$ and $\overline{T'}$ to a zig-zag between $S'$ and $T'$.
It thus follows that $S$ and $T$ lie in the same connected component.
\end{proof}

\begin{lemma} \label{left-kan-extension-pres-mono}
Let $\cal{A}$ be an elegant Reedy category and $i \co \cal{A} \to \cal{B}$ a functor.
Assume that $\cal{A}$ has pullbacks along face maps whenever the cospan under consideration lies in the image of the projection $M \downarrow i \to \cal{A}$ for some $M \in \cal{B}$, that these pullbacks preserve face and degeneracy maps, and that $i$ preserves these pullbacks.
Then left Kan extension $i_! \co \widehat{\cal{A}} \to \widehat{\cal{B}}$ preserves monomorphisms.
\end{lemma}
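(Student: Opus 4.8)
The plan is to compute $i_!$ pointwise and reduce to \cref{reedy-mono-lemma} applied, for each $M \in \cal{B}$, to the comma category $M \downarrow i$ (objects $(a, u \co M \to ia)$, morphisms $g \co a \to a'$ with $ig \circ u = u'$), with projection $\pi_M \co M \downarrow i \to \cal{A}$. The pointwise formula for left Kan extension gives an isomorphism
\[
(i_! X)(M) \cong \colim(\pi_M^* X),
\]
natural in $X$, where $\colim \co \widehat{M \downarrow i} \to \Set$ and $\pi_M^* \co \widehat{\cal{A}} \to \widehat{M \downarrow i}$ is restriction along $\pi_M$; indeed $(M \downarrow i)^{\op}$ is precisely the comma category $i^{\op} \downarrow M$ occurring in the pointwise formula for $\Lan_{i^{\op}} X$ at $M$. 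Now $\pi_M^*$, being a restriction functor, preserves all limits and in particular monomorphisms, and monomorphisms of presheaves are detected pointwise. Hence, to prove that $i_!$ preserves monomorphisms it is enough to show that $\colim \co \widehat{M \downarrow i} \to \Set$ does so for every $M \in \cal{B}$, and this will follow from \cref{reedy-mono-lemma} once $M \downarrow i$ is seen to be an elegant Reedy category with pullbacks along face maps that preserve face and degeneracy maps.

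First I would transport the Reedy structure from $\cal{A}$: assign to $(a, u)$ the degree of $a$, and place a morphism of $M \downarrow i$ in $(M \downarrow i)_+$ resp.\ $(M \downarrow i)_-$ exactly when its underlying $\cal{A}$-morphism lies in $\cal{A}_+$ resp.\ $\cal{A}_-$. The projection $\pi_M$ reflects isomorphisms --- an inverse in $\cal{A}$ automatically satisfies the commutation condition over $M$ --- so non-invertible maps strictly change degree; and the unique Reedy factorization of a morphism in $\cal{A}$ lifts uniquely by equipping the intermediate object with the structure map obtained from $u$ along its degeneracy part. In the same way, $M \downarrow i$ is skeletal, or generalized Reedy, precisely insofar as $\cal{A}$ is.

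The two remaining conditions are exactly what the hypotheses on $i$ and $\cal{A}$ are designed to deliver. A cospan of face maps in $M \downarrow i$ lies over a cospan of face maps in $\cal{A}$ in the image of $\pi_M$, which by hypothesis has a pullback $P$ in $\cal{A}$ with face-map projections, preserved by $i$; the two structure maps $M \to ia$ and $M \to ib$ become equal after composing into the cospan apex $ic$, so --- $i$ preserving the pullback --- they induce a canonical $M \to iP$, and a routine check shows that the resulting object of $M \downarrow i$ is the pullback there, with face-map projections. Whatever ``these pullbacks preserve face and degeneracy maps'' unpacks to, it is a statement about the $\cal{A}_+/\cal{A}_-$-classification and so descends to $M \downarrow i$. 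For elegance, an $\cal{A}_-$-span in $M \downarrow i$ lies over an $\cal{A}_-$-span in $\cal{A}$, whose pushout exists and lies in $\cal{A}_-$ since $\cal{A}$ is elegant; either leg equips the apex with a structure map from $M$, this choice is forced and automatically compatible with the cocone data, and one verifies directly that this yields the pushout in $M \downarrow i$ with legs in $(M \downarrow i)_-$.

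I expect the elegance transfer to be the one step requiring care. Constructing the $M \downarrow i$-pushout itself uses nothing about $i$, but one must match whichever precise formulation of ``elegant Reedy category'' is in force: if it demands that the $\cal{A}_-$-pushouts be \emph{absolute}, then one should either verify that absoluteness is preserved along the discrete opfibration $\pi_M$, or instead invoke that $\cal{A}_-$-pushouts in an elegant Reedy category are in any case absolute --- hence preserved by $i$ --- and reconstruct the $M \downarrow i$-pushout from the corresponding pushout in $\cal{B}$. Everything else --- the Reedy axioms, and the existence and good behaviour of pullbacks along face maps --- is a matter of carrying the structure map $M \to i(-)$ along, using that the cospans that arise in $\cal{A}$ are by construction in the image of $\pi_M$, which is exactly the range over which the hypothesis applies.
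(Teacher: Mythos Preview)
Your proposal is correct and follows essentially the same route as the paper: reduce to the pointwise colimit formula $(i_! X)(M) \cong \colim \pi_M^* X$, note that $\pi_M^*$ preserves monomorphisms, and then apply \cref{reedy-mono-lemma} to $M \downarrow i$ after transporting the elegant Reedy structure and the pullback hypotheses along the projection $\pi_M$. The paper's proof is terser---it simply asserts that $p$ \emph{creates} the elegant Reedy structure and that the pullback conditions descend---whereas you spell out the Reedy transfer and (rightly) flag the elegance step as the one needing care; since $\pi_M$ is a discrete opfibration this step does go through, so your caution is well placed but not an obstruction.
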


\begin{proof}
It will suffice to show that left Kan extension along $i$ followed by evaluation at $M \in \cal{B}$ preserves monomorphisms.
By the standard formula for left Kan extensions, this is given by the composite functor
\[
\xymatrix@C+0.2cm{
  \widehat{\cal{A}}
  \ar[r]^-{p^*}
&
  \widehat{M \downarrow i}
  \ar[r]^-{\colim}
&
  \Set
\rlap{.}}
\]
where $p \co M \downarrow i \to \cal{A}$ is the evident projection.
The first arrow clearly preserves monomorphisms.
For the second arrow, it will suffice to verify the assumptions of \cref{reedy-mono-lemma}.
For this, we note that $p$ creates an elegant Reedy structure on $M \downarrow i$ from $\cal{A}$.
Pullbacks along face maps in $M \downarrow i$ exist and preserve face and degeneracy maps by the corresponding assumptions on $\cal{A}$, using that $i$ preserves the relevant pullbacks.
\end{proof}

\begin{proof}[Proof of \cref{left-kan-extension-pres-mono-instance}]
As per the diagram~\eqref{essential-geometric-embeddings}, the left adjoint in question is equivalent to the functor
\[
\xymatrix{
  \widehat{\Delta}
  \ar[r]^-{i_!}
&
  \widehat{\FinComplPoset}
}
\]
given by left Kan extension along the full inclusion $i \co \Delta \to \FinComplPoset$.
It will thus suffice to verify that $i$ satisfies the assumptions of \cref{left-kan-extension-pres-mono}.

We write $\Delta_\aug$ for the augmented simplex category.
Recall the elegant Reedy structures on $\Delta_\aug$ and $\Delta$, created by the full inclusion $\Delta \to \Delta_\aug$.
Note that $\Delta_\aug$ has pullbacks along face maps that preserve face and degeneracy maps and that $i' \co \Delta_\aug \to \Poset$ preserves these pullbacks.
Since $\Delta \to \Delta_\aug$ and $\FinComplPoset \to \Poset$ are fully faithful, the same holds true for $i$ whenever the limiting object of the corresponding pullback in $\Delta_\aug$ lives in $\Delta$, \ie has a point.
This is the case when the given cospan is in the image of $M \downarrow i \to \Delta$ for some $M \in \FinComplPoset$: the pullback in $\Poset$ then has a map from $M$, which has a point, for example the terminal object.
\end{proof}

\subsection{Quillen adjunctions of model structures}

Recall the Kan model structure $\widehat{\Delta}_\Kan$~\cite{quillen:kan-model-structure}, characterized by the following data:
\begin{itemize}
\item the cofibrations are the monomorphisms,
\item the fibrations are those maps with the right lifting property against horn inclusions $\Lambda^n_k \to \Delta^n$ where $n > 0$ and $0 \leq k \leq n$.
\end{itemize}
Alternatively, the fibrations can also be characterized via the right lifting property against pushout products of $\braces{k} \hookrightarrow \Delta^1$ with monomorphisms for $k = 0, 1$~\cite{gabriel-zisman:calculus-of-fractions}.%
\footnote{Note that this alternative characterization relies on classical logic to show that every monomorphism lies in the the weak saturation of boundary inclusions of simplices.
Following~\cite{sattler:model-structure}, one may reduce the use of classical logic in the development of the Kan model structure to this single point.}

In cubical sets, let us write $\Box^1$ for the functor represented by $[1]$.
Let us denote $\braces{k} \hookrightarrow \Box^1$ where $k = 0, 1$ its two points inclusions.
Using the techniques of~\cite{cohen-et-al:cubicaltt} and~\cite{sattler:model-structure}, one may show that there is a combinatorial model structure $\widehat{\Box}_\ttm$ characterized by the following data:
\begin{itemize}
\item the cofibrations are the monomorphisms,
\item the fibrations are those maps with the right lifting property against pushout products of $\braces{k} \hookrightarrow \Box^1$ with monomorphisms.
\end{itemize}
Since the details of this construction have not yet been published, we assume the existence of this model structure in the below.

In the following, we will relate this model structure to the Kan model structure via two Quillen adjunctions.
We will freely exploit the equivalence of cubical sets with presheaves over $\FinComplPoset$ and denote the essential geometric embedding $\widehat{\Delta} \to \widehat{\Box}$ simply as $i_! \dashv i^* \dashv i_*$ where $i \co \Delta \to \FinComplPoset$ is the full embedding.
This is justified by the proof of \cref{embedding-sset-to-cset}.

It is clear that $i^* \co \widehat{\Box} \to \widehat{\Delta}$ preserves cofibrations.
Since $i^*$ preserves colimits and products and sends $\Box^1$ to $\Delta^1$, it preserves prism-style generating trivial cofibrations.
The reflective embedding of simplicial sets into cubical sets thus forms a Quillen adjunction
\[
\xymatrix@C+0.5cm{
  \widehat{\Delta}_\Kan
  \ar@/^1em/[r]^{i^*}
  \ar@{}[r]|{\bot}
&
  \widehat{\Box}_\ttm
  \ar@/^1em/[l]^{i_*}
\rlap{.}}
\]
The interval objects $\Delta^1$ and $\Box^1$ induce functorial cylinders in $\widehat{\Delta}$ and $\widehat{\Box}$ via the Cartesian product (note that the latter cylinder preserves representables).
These induce evident notions of (elemetary) homotopy and homotopy equivalence in both settings.
Since $i^*$ and $i_*$ respect these interval objects and preserve products, they preserve homotopies and homotopy equivalences (it follows that $i_*$ reflects homotopy equivalences).
Note that homotopy equivalences are in particular weak equivalences and coincide with weak equivalences between fibrant objects (since all objects are cofibrant in both settings).

\begin{proposition}
We have a Quillen adjunction
\[
\xymatrix@C+0.5cm{
  \widehat{\Delta}_\ttm
  \ar@/^1em/[r]^{i_!}
  \ar@{}[r]|{\bot}
&
  \widehat{\Box}_\Kan
  \ar@/^1em/[l]^{i^*}
\rlap{.}}
\]
\end{proposition}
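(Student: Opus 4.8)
The plan is to exhibit $i_!$ as a left Quillen functor, which by adjunction is the same as showing that $i^*$ is right Quillen. Note first that $\widehat{\Delta}_\ttm$ coincides with $\widehat{\Delta}_\Kan$: both have the monomorphisms as cofibrations, and by the alternative characterization of Kan fibrations recalled above their fibrations agree. Since all objects are cofibrant here (the cofibrations being the monomorphisms), and likewise in $\widehat{\Box}_\Kan$, it suffices to check that $i_!$ preserves cofibrations and weak equivalences: it then automatically preserves trivial cofibrations, which together with preservation of cofibrations is precisely the left Quillen condition. Equivalently, as $\widehat{\Delta}_\Kan$ is cofibrantly generated one could instead check preservation on the generating boundary and horn inclusions — or on the prism-style generators — but the argument below is cleaner.

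Preservation of cofibrations is the substantive point, and it has already been discharged: the cofibrations of $\widehat{\Box}_\Kan$ are the monomorphisms (as in $\widehat{\Box}_\ttm$), and $i_!$ preserves monomorphisms by \cref{left-kan-extension-pres-mono-instance}. I expect this to be the only genuine obstacle; everything that follows is formal.

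For preservation of weak equivalences, recall that $i \co \Delta \to \FinComplPoset$ is fully faithful, so that left Kan extension $i_!$ is fully faithful as well; concretely, the unit $\id \to i^* i_!$ is an isomorphism. Recall also, from the discussion following \cref{embedding-sset-to-cset}, that $i^*$ is the triangulation functor, and that $\widehat{\Box}_\Kan$ is set up so that $i^*$ creates — in particular reflects — its weak equivalences from $\widehat{\Delta}_\Kan$ (so that $f$ is a weak equivalence exactly when $i^* f$ is). Hence, for a weak equivalence $f$ in $\widehat{\Delta}_\ttm = \widehat{\Delta}_\Kan$, the map $i^* i_! f \cong f$ is again a weak equivalence, so $i_! f$ is a weak equivalence in $\widehat{\Box}_\Kan$. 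Together with the previous paragraph this shows that $i_!$ preserves trivial cofibrations, hence is left Quillen, and $(i_!, i^*)$ is the claimed Quillen adjunction. If one instead prefers to produce $\widehat{\Box}_\Kan$ itself by right transfer of $\widehat{\Delta}_\Kan$ along $i_! \dashv i^*$, the same inputs suffice: smallness is automatic in a presheaf category, and the acyclicity condition holds because $i^*$ is cocontinuous, being left adjoint to $i_*$, and $i^* i_! \cong \id$, so $i^*$ sends any cell complex built from $i_!$ of the generating trivial cofibrations of $\widehat{\Delta}_\Kan$ to one built from those generators, which is a weak equivalence.
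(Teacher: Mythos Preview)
Your argument hinges on the claim that ``$\widehat{\Box}_\Kan$ is set up so that $i^*$ creates --- in particular reflects --- its weak equivalences from $\widehat{\Delta}_\Kan$''. But the paper never defines a model structure with that property: the only cubical model structure introduced is $\widehat{\Box}_\ttm$, whose fibrations (and hence weak equivalences) are defined intrinsically via lifting against pushout products with $\braces{k} \hookrightarrow \Box^1$. The subscripts in the displayed statement are swapped by a typo; the intended target is $\widehat{\Box}_\ttm$, as the paper's own proof makes clear. For that model structure, the assertion that $i^*$ (triangulation) reflects weak equivalences is exactly the statement that $\widehat{\Box}_\ttm$ models classical homotopy types --- a substantial question not available as an input here. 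So the step ``$i^* i_! f \cong f$ is a weak equivalence, hence $i_! f$ is'' is unjustified, and the alternative of producing the codomain model structure by right transfer changes the theorem being proved.

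The paper's argument avoids this entirely. After invoking \cref{left-kan-extension-pres-mono-instance} for cofibrations (as you do), it shows directly that $i_!$ takes horn inclusions to weak equivalences in $\widehat{\Box}_\ttm$: first, $i_! \Delta^n \simeq y[n]$ is weakly contractible via the explicit contracting homotopy $[1] \times [n] \to [n]$ sending $(0,k) \mapsto 0$, $(1,k) \mapsto k$ (so homotopy equivalences with respect to $\Box^1$, which are weak equivalences in $\widehat{\Box}_\ttm$, suffice); then an induction over generalized horns $\Lambda_{[n]\setminus I}^n \hookrightarrow \Delta^n$ using pushout squares and closure of trivial cofibrations under pushout handles the rest. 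This uses only what is actually established about $\widehat{\Box}_\ttm$.
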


\begin{proof}
By \cref{left-kan-extension-pres-mono-instance}, we have that $i_!$ preserves cofibrations.
It remains to show that $i_!$ sends generating trivial cofibrations, \ie horn inclusions, to weak equivalences.

Note first that $i_! \Delta^n \simeq y[n]$ is weakly contractible as there is a contracting homotopy $[1] \times [n] \to [n]$ that sends $(0, k)$ to $0$ and $(1, k)$ to $k$.
By 2-out-of-3, it follows that the action of $i_! y$ on maps is valued in weak equivalences.

In the standard fashion of simplicial homotopy theory, one may now show by induction on $n$ that $i_!$ sends $\Lambda_{[n] \backslash I}^n \hookrightarrow \Delta^n$ to a weak equivalence where $\emptyset \subsetneq I \subsetneq \braces{0, \ldots, n}$ and $\Lambda_{[n] \setminus I}^n$ denotes the $([n] \setminus I)$-horn, the union over $i \in I$ of the $i$-th face of $\Delta^n$.
The base case $\verts{I} = 1$ of a face inclusion is covered by the preceding paragraph.
In the induction step $\verts{I} > 1$, one picks $i \in I$, has a pushout
\[
\xymatrix{
  \Lambda_{[n-1] \setminus d_i^{-1}(I)}^{n-1}
  \ar[r]
  \ar[d]
&
  \Lambda_{[n] \setminus I}^n
  \ar[d]
\\
  \Delta^{n-1}
  \ar[r]
&
  \Delta^n
  \pullbackcorner{ul}
\rlap{,}}
\]
and uses cocontinuity of $i_!$ and closure of trivial cofibrations under pushout.
The original claim is obtained for maximal choices of $I$.
\end{proof}

\bibliographystyle{alpha}
\bibliography{idempotent-completion}

\end{document}